\begin{document}
\textheight=570pt 
\begin{center} {\Large\bf  Dominion of Some Graphs}
\vskip 1cm {\bf  Julian A.D. Allagan$^{1}$, Benkam Bobga$^{2}$  }\\
\
\\$^{1}$Department of Mathematics, Computer Science and Engineering Technology\\
Elizabeth City State University \\
	Elizabeth City, North Carolina, U.S.A 
\\%Repeat if there is a coauthor with a different address
\
\\$^{2}$Department of Mathematics\\
University of North Georgia\\
Gainesville, Georgia, U.S.A
\\%Repeat if there is a coauthor with a different address
\
\\email: adallagan@ecsu.edu, benkam.bobga@ung.edu \\
\end{center}
\begin{center}
(January 28, 2021)
\end{center}
\begin{abstract}
			Given a graph $G = (V,E)$, a subset $S \subseteq V$ is a dominating set if every vertex in $V\setminus S$ is adjacent to some vertex in $S$. The dominating set with the least cardinality, $\gamma$, is called a $\gamma$-set which is commonly known as a minimum dominating set. The dominion of a graph $G$, denoted by $\zeta(G)$, is the number of its $\gamma$-sets. Some relations between these two seemingly distinct parameters are established. In particular, we present the dominions of paths, some cycles and the join of any two graphs.   
\end{abstract}
\begingroup
\newtheorem{conjecture}{Conjecture}[section]
\newtheorem{theorem}{Theorem}[section]
\newtheorem{lemma}[theorem]{Lemma}
\newtheorem{proposition}[theorem]{Proposition}
\newtheorem{corollary}[theorem]{Corollary}
\newtheorem{definition}[theorem]{Definition}
\newtheorem{remark}[theorem]{Remark}
\newtheorem{example}[theorem]{Example}
\newtheorem{claim}[theorem]{Claim}
\endgroup
\def\E#1{\left\langle #1 \right\rangle}
\def\diag{\mathop{\fam 0\relax diag}\nolimits}
\newcommand{\ds}{\displaystyle}
%\usepackage[usenames]{color}
%\baselineskip=24pt
%%%%%%%%%%%%%%%%%%%%%%%%%%%%%%%%%%%%%%%%%	
\section{Introduction}

A {\it dominating set} for a graph $G = (V,E)$ is a subset $S \subseteq V$ such that every vertex $v\in V$ is either in $S$ or has a neighbor $u\in S$. The vertex $u \in S$ is said to \textit{cover}  the vertex $v \in V$ if either $u=v$ or $uv\in E$. A dominating set $S$ is a \textit{minimal dominating set} if no proper subset $S'\subset S$ is a dominating set. The \textit{domination number} $\gamma(G)$ of a graph $G$ is the minimum cardinality among all dominating sets of $G$. Clearly, a graph $G$ can have multiple minimum dominating sets. For simplicity, we refer to those sets as \textbf{$\gamma$-sets}. 

Dominating sets, in particular $\gamma$-sets, have been extensively researched. We recommend to the reader Haynes et al.'s book \cite{Hay2} and several other research works \cite{FB, BR, EC, GD, MH, FH, Hay1, ML} on variants and applications of dominating sets.
In this literature we found various versions of $\gamma$-set such as the perfect dominating set, the double dominating set, the strong dominating set, and the restrained dominating set. In general, each such type of $\gamma$-set has some specified properties. Here, we list five of the most commonly studied $\gamma$-sets along with their properties. Later in Example \ref{Ex}, we illustrate each such $\gamma$-set.

\begin{enumerate}
	\item \textit{Perfect $\gamma$-set} requires that no two vertices in a $\gamma$-set cover the same vertex (outside the $\gamma$-set). \item \textit{Connected $\gamma$-set} requires that the graph induced by the $\gamma$-set be connected.
	\item \textit{Total $\gamma$-set} forbids isolated vertices in the subgraph induced by the $\gamma$-set.
	\item \textit{Independent $\gamma$-set} requires all vertices in a $\gamma$-set to be isolated vertices in the subgraph they induce.
	\item \textit{clique $\gamma$ set} requires that the vertices in a $\gamma$-set induce a clique.
	% \item \textit{Red-Blue $\gamma$-set} requires that the vertices in a $\gamma$-set can be partitioned into two sets, Red and Blue, and the vertices in Red dominate those in Blue.
\end{enumerate}

Because a given graph may have multiple $\gamma$-sets, this article addresses a natural but fundamental question:\textit{ How many  $\gamma$-sets does a given graph have?} To answer this question, we introduce the notion of \textbf{dominion}.

The \textit{dominion (number)} of  a graph $G=(V,E)$, denoted by $\zeta(G)$, is the number of its $\gamma$-sets. In other words, 

$\zeta(G):=|\{S: S \text{ is a dominating set in} \ G \ \text{and} \ |S|=\gamma\}|$. For instance, given a  complete graph $G=K_n$, it is obvious that each $v\in V(G)$ covers the remaining $n-1$ vertices of $G$. Therefore $\zeta(G)=n$ while $\gamma(G)=1$. Also, for a star graph $G=K_{1,n}$ of order $n+1\ge 3$, it is easy to see that $\gamma(G)=1=\zeta(G)$ since only the central vertex can cover all of the remaining $n\ge 2$ vertices. %To the best of our knowledge there is no article on enumerating distinguishable $\gamma$-sets.

\begin{example}{\label{Ex}}
\end{example}
%Perhaps one reason is because it is a lot harder. 
Here, we illustrate the $\gamma$-sets of a unicyclic graph $G$ (see Figure \ref{fig1}). 
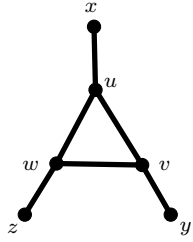
\begin{figure}[ht]
	\centering
	\begin{tikzpicture}[scale=1]
	\clip(0,-1) rectangle (4,4);
	\draw [line width=2pt] (2.62,1)-- (2,2);
	\draw [line width=2pt] (2,2)-- (1.98,2.84);
	\draw [line width=2pt] (2.62,1)-- (3,0.34);
	\draw [line width=2pt] (2,2)-- (1.48,1.02);
	\draw [line width=2pt] (1.48,1.02)-- (2.62,1);
	\draw [line width=2pt] (1.48,1.02)-- (1.06,0.34);
	\begin{scriptsize}
	\draw [fill=black] (2,2) circle (2.5pt);
	\draw[color=black] (2.2,2.1) node {$u$};
	\draw [fill=black] (2.62,1) circle (2.5pt);
	\draw[color=black] (2.92,1) node {$v$};
	\draw [fill=black] (1.98,2.84) circle (2.5pt);
	\draw[color=black] (1.94,3.1) node {$x$};
	\draw [fill=black] (3,0.34) circle (2.5pt);
	\draw[color=black] (3.2,0.15) node {$y$};
	\draw [fill=black] (1.48,1.02) circle (2.5pt);
	\draw[color=black] (1.14,1) node {$w$};
	\draw [fill=black] (1.06,0.34) circle (2.5pt);
	\draw[color=black] (0.9,0.15) node {$z$};
	\end{scriptsize}
	\end{tikzpicture}
	\caption{A sun graph $G$ with $\gamma(G)=3$ and $\zeta(G)=8$} \label{fig1}
	
\end{figure}

Consider the unicyclic graph $G$ as shown in Figure \ref{fig1}. It is easy to see that $\gamma(G)=3$. We list all the $\gamma$-sets with properties which are among the five commonly studied, as  previously mentioned.

\begin{itemize}

	\item $\{u,v,w\}$ is a perfect $\gamma$-set, a connected $\gamma$-set, a total $\gamma$-set, and a clique $\gamma$-set.
	\item $\{x,y,z\}$ is a perfect $\gamma$-set, and an independent $\gamma$-set.
	 \item $\{u,y,z\}$, $\{v,x,z\}$, and $\{w,x,y\}$ are independent $\gamma$-sets but not perfect, connected, clique or total. 
	\item $\{u,v,z\}$, $\{u,w,y\}$, and $\{v,w,x\}$ are  $\gamma$-sets which have none of the $5$ special properties that we previously listed.
\end{itemize}
We have $\ds \zeta(G)=\sum_{i=0}^3{3\choose i}=8$, the number of $\gamma$-sets of $G$.
%\begin{remark}
%\end{remark}
We present a generalization of the previous example for such unicyclic graphs which are commonly known as \textit{sun graphs}.

\begin{proposition}
   If $G$ is a sun graph on $2n\ge 6$ vertices, then $\ds \zeta(G)=\sum_{i=0}^n{n\choose i}=2^n. $  
\end{proposition}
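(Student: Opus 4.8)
The plan is to pin down $\gamma(G)$ first and then argue that every minimum dominating set is forced into an especially rigid form. Label the cycle vertices $u_1,\dots,u_n$ (so that $u_iu_{i+1}\in E$, indices taken modulo $n$) and write $p_i$ for the pendant vertex attached to $u_i$, so that $V(G)=\{u_1,\dots,u_n\}\cup\{p_1,\dots,p_n\}$ and $|V(G)|=2n$. The crucial observation is that each pendant $p_i$ has the single neighbour $u_i$, so any dominating set $S$ must satisfy $S\cap\{u_i,p_i\}\neq\emptyset$ for every $i$. Since the $n$ pairs $\{u_i,p_i\}$ are pairwise disjoint, this forces $|S|\ge n$; as $\{u_1,\dots,u_n\}$ is visibly dominating, I would conclude $\gamma(G)=n$.

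Next I would characterise the $\gamma$-sets. If $S$ is a dominating set with $|S|=\gamma(G)=n$, then the $n$ disjoint pairs $\{u_i,p_i\}$ each meet $S$ and together account for all $n$ of its vertices, so $S$ must contain \emph{exactly} one vertex of each pair. Thus every $\gamma$-set arises from an independent choice, for each index $i$, of either the cycle vertex $u_i$ or the pendant $p_i$, giving at most $2^n$ candidate sets.

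The key step is to check that every one of these $2^n$ candidates really is dominating, so that none of the choices is discarded. For a fixed selection, $p_i$ is covered because exactly one of $u_i,p_i$ lies in $S$ (either $p_i$ covers itself, or $u_i$ covers it). The point demanding care is the cycle vertex $u_i$ in the case where the pendant $p_i$ was selected and $u_i\notin S$: here $u_ip_i\in E$, so $p_i$ covers $u_i$. Hence all $2^n$ selections are genuine $\gamma$-sets, and grouping them according to the number $i$ of pendants chosen yields
\[
\zeta(G)=\sum_{i=0}^{n}\binom{n}{i}=2^{n},
\]
as claimed. I expect no serious obstacle; the only place that requires attention is precisely this last verification that choosing a pendant still dominates its own cycle vertex, which is exactly where the corona (pendant-per-vertex) structure of the sun graph does all the work, rendering the cycle edges themselves irrelevant to the count.
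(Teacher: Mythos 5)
Your proof is correct and follows essentially the same route as the paper: both arguments rest on the observation that each pendant forces $S\cap\{u_i,p_i\}\neq\emptyset$, so a minimum dominating set picks exactly one vertex from each of the $n$ disjoint pairs, and every such selection indeed dominates. Your write-up is in fact somewhat more complete than the paper's, since you explicitly establish $\gamma(G)=n$ and verify the converse (that each of the $2^n$ selections dominates), both of which the paper leaves largely implicit.
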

    
\begin{proof}
For each vertex on the cycle, and for each dominating set $S\subseteq V(G)$, either the vertex or its leaf neighbor must be in $S$. Therefore, we can form all $\gamma$-sets $S$ in $G$ from choosing subsets of the set of $n$ leafs, and the cycle neighbors of unchosen leafs. This gives  $\ds \sum_{i=0}^n{n\choose i}$.  Conversely, every $S$ assembled by choosing either the vertex on the cycle or the leaf, for each leaf, is a $\gamma$-set for $G$. This gives $\zeta(G)=2^n$.
\end{proof}

%; they are known as to produce , where $i$ counts the number of leaves included in each $\gamma$-set.

%%%%%%%%%%%%%%%%%%%%%%%%%%%%%%%%%%%%%%%%%
\section{Dominion of Some Graphs}

Suppose $P_n:=v_1-v_2-\ldots-v_{n-1}-v_n$, denotes a path on $n\ge 2$ vertices. We establish each of the following claims:
\begin{claim}{\label{c1}}
	There is no $\gamma$-set that includes both vertices $v_{n-1},v_n$. 
\end{claim}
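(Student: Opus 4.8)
The plan is to argue by contradiction, exploiting the fact that $v_n$ is a leaf of $P_n$ whose sole neighbor is $v_{n-1}$. Suppose, toward a contradiction, that $S$ is a $\gamma$-set with $\{v_{n-1},v_n\}\subseteq S$. I would then show that $v_n$ is superfluous in $S$, meaning that $S\setminus\{v_n\}$ is still a dominating set of $P_n$; since $|S\setminus\{v_n\}|=\gamma(P_n)-1<\gamma(P_n)$, this contradicts the minimality of $S$ and forces the conclusion.

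First I would pin down exactly which vertices $v_n$ is responsible for covering. Because $v_n$ has degree $1$ with unique neighbor $v_{n-1}$, the only vertices that $v_n$ can cover are $v_n$ itself and $v_{n-1}$. Next I would check that both of these remain covered after deleting $v_n$: the vertex $v_{n-1}\in S$ covers itself, and being adjacent to $v_n$ it also covers $v_n$. Every other vertex of the path was already covered by some element of $S$ distinct from $v_n$, since $v_n$ covers none of $v_1,\ldots,v_{n-2}$. Hence $S\setminus\{v_n\}$ dominates all of $V(P_n)$, completing the contradiction.

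There is no genuine obstacle here; the only point needing care is the routine bookkeeping that removing $v_n$ leaves nothing uncovered, and this reduces entirely to the single observation that $v_n$'s support vertex $v_{n-1}$ stays in the set. The underlying redundancy principle — a leaf whose neighbor already lies in $S$ can never be forced into a minimum dominating set — is exactly the kind of exchange argument I expect to reuse when establishing the subsequent claims about paths and cycles.
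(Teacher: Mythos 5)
Your proof is correct, and it is genuinely different from — and more elementary than — the one in the paper. You use a redundancy (exchange) argument: since $v_n$ is a leaf, its closed neighborhood $\{v_{n-1},v_n\}$ is entirely dominated by $v_{n-1}$ alone, so if both vertices lay in a $\gamma$-set $S$, then $S\setminus\{v_n\}$ would still dominate, contradicting the minimality of $\gamma$. This works uniformly for every $n\ge 2$, never requires knowing the value of $\gamma(P_n)$, and in fact proves something stronger and more general: in \emph{any} graph, no minimal dominating set (let alone a minimum one) can contain both a leaf and its support vertex. The paper instead argues by cases on $n \bmod 3$: it invokes the explicit values $\gamma(P_{3k+1})=k+1$, $\gamma(P_{3k})=k$, and uses a covering-capacity count (each vertex of a path covers at most three vertices, so the remaining $k-1$ vertices of $S$ cannot cover the $3k-2$ vertices left over) together with a pigeonhole argument for the $n\equiv 0 \pmod 3$ case. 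What the paper's heavier approach buys is that this counting machinery — partitioning the path into consecutive triples $T_r$, each of which must contain exactly one vertex of a $\gamma$-set — is exactly what gets reused in Claim \ref{c4}, Lemma \ref{p1}, and the proof of Theorem \ref{T1}, so introducing it here amortizes the effort; your argument, while cleaner for this isolated claim, would still leave that machinery to be developed later.
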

\begin{proof}
	Suppose $n=3k+1$. It is easy to see that $\gamma(P_{3k+1})=k+1$ and without loss, let's assume that there is a $\gamma$-set $S$, with $|S|=k+1$, that includes both  $v_{3k},v_{3k+1}$. Since $v_{3k-1}$ is covered by $v_{3k}$ (and vice versa), this leaves exactly $3k-2$ vertices of $P_{3k+1}$ that are to be covered using the remaining $k-1$ vertices of $S$. Because each vertex of $S$ covers at most $2$ other vertices of $P_{3k+1}$, $k-1$ vertices can cover at most $3(k-1)$ vertices of the path, and $3(k-1)<3k-2$. A similar argument addresses the case when $n=3k+2$ and the case when $n=3k$ follows from a pigeonhole principle argument, since $\gamma(P_{3k})=k$.
\end{proof}

\begin{claim}{\label{c2}}
	There is no $\gamma$-set that excludes both vertices $v_{n-1},v_n$. 
\end{claim}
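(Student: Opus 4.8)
The plan is to observe that, unlike Claim~\ref{c1}, this statement follows directly from the definition of domination applied to the endpoint $v_n$, with no need for any case analysis on $n \bmod 3$. First I would record the one structural fact that matters: $v_n$ is a leaf of $P_n$, so its only neighbor is $v_{n-1}$ and its closed neighborhood is $N[v_n]=\{v_{n-1},v_n\}$.

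Next I would invoke the defining property of a dominating set, namely that every vertex must be covered. In particular $v_n$ must be covered, which means $v_n\in S$ or some neighbor of $v_n$ lies in $S$. Since $v_{n-1}$ is the unique neighbor of $v_n$, this forces $\{v_{n-1},v_n\}\cap S\neq\varnothing$. Hence no dominating set, and a fortiori no $\gamma$-set, can omit both $v_{n-1}$ and $v_n$, which is precisely the claim.

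Because the argument uses only the minimal requirement that $S$ dominate the leaf $v_n$, I do not expect a genuine obstacle here; the only thing to get right is noting that $v_n$ has degree one so that its sole neighbor is $v_{n-1}$. Taken together with Claim~\ref{c1}, this yields the sharper conclusion that every $\gamma$-set of $P_n$ contains \emph{exactly one} of $v_{n-1},v_n$, which I anticipate is the structural fact the authors will use when computing $\zeta(P_n)$.
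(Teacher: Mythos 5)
Your proof is correct and is essentially the same as the paper's: both arguments note that since $v_{n-1}$ is the only neighbor of the leaf $v_n$, excluding both would leave $v_n$ uncovered, so no dominating set (and hence no $\gamma$-set) can omit both. Your write-up is just a slightly more explicit version of the paper's argument, making the closed-neighborhood observation $N[v_n]=\{v_{n-1},v_n\}$ formal.
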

\begin{proof}
	Suppose $S$ is a $\gamma$-set of $P_n$ that excludes both consecutive vertices $v_{n-1},v_n$. It follows that $v_n\in V(P_n)$ is left uncovered. Hence, $S$ is not a $\gamma$-set of $P_n$.
\end{proof}

\begin{claim}{\label{c3}}
	Every $\gamma$-set includes either $v_{n-1}$ or $v_n$ but not both. 
\end{claim}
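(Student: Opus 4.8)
The plan is to obtain this statement directly from the two preceding claims, since it is nothing more than their logical conjunction. The phrase ``includes either $v_{n-1}$ or $v_n$ but not both'' is an exclusive disjunction, which I would split into two independent requirements: that a $\gamma$-set contains \emph{at most} one of $v_{n-1}, v_n$, and that it contains \emph{at least} one of them. Establishing each half separately and then combining them yields the exclusive ``exactly one'' condition.

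First I would invoke Claim \ref{c1} to dispatch the ``at most one'' half: no $\gamma$-set can contain both $v_{n-1}$ and $v_n$, so any $\gamma$-set contains at most one of the two. Next I would invoke Claim \ref{c2} for the ``at least one'' half, reading it contrapositively: since no $\gamma$-set excludes both $v_{n-1}$ and $v_n$, every $\gamma$-set must contain at least one of them. Putting the two halves together, any $\gamma$-set of $P_n$ contains neither zero nor two of the pair $\{v_{n-1}, v_n\}$, hence exactly one, which is precisely the assertion of the claim.

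Since both halves are already proved, there is no substantive obstacle remaining. The only point requiring care is to confirm that the two claims between them exhaust the full dichotomy for the pair $\{v_{n-1}, v_n\}$ (both in, both out, exactly one in), so that ruling out the first two cases forces the third. No additional case analysis on $n \bmod 3$ is needed at this stage, as that arithmetic was already absorbed into the proof of Claim \ref{c1}.
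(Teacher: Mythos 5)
Your proposal is correct and matches the paper's intent exactly: the paper states Claim \ref{c3} without proof precisely because it is the immediate logical combination of Claim \ref{c1} (ruling out ``both'') and Claim \ref{c2} (ruling out ``neither''), which is the decomposition you give. Nothing further is needed.
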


\begin{claim}{\label{c4}}
	When $n=3k, k\ge 1$, $\zeta(P_n)=1$ and the unique $\gamma$-set in $P_n$ contains $v_{n-1}$. Moreover, when $n=3k+2, k\ge 1$, there is only one $\gamma$-set in $P_n$ containing $v_{n}$. 
\end{claim}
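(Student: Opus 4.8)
The plan is to prove the two statements in turn, deriving the second from the first. For $n=3k$, the key observation is that a dominating set of the minimum size $\gamma(P_{3k})=k$ is forced to be an \emph{efficient} (perfect) dominating set, and such a set on a path is unique; for $n=3k+2$, I will reduce to the case $n=3k$ by deleting the last two vertices.

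First I would treat $n=3k$. Each vertex of $P_n$ covers at most three vertices (itself and its two neighbours), so a set $S$ with $|S|=k$ covers at most $3k=n$ vertices; since it must cover all $n$, every vertex of $S$ covers exactly three vertices and no two of them cover a common vertex. In particular no endpoint lies in $S$ (an endpoint covers only two vertices), so $v_1\notin S$, which forces $v_2\in S$ in order to cover $v_1$; efficiency then prevents $v_3\in S$, since $v_3$ would be covered both by itself and by $v_2$. Thus $S\cap\{v_1,v_2,v_3\}=\{v_2\}$, and $v_2$ covers exactly $v_1,v_2,v_3$. Peeling these off, $S\setminus\{v_2\}$ is a dominating set of size $k-1$ of the induced subpath on $v_4,\dots,v_{3k}$, which is a copy of $P_{3(k-1)}$. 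I would then finish by induction on $k$ (base case $P_3$, whose only $\gamma$-set is $\{v_2\}$), concluding that $S=\{v_2,v_5,\dots,v_{3k-1}\}$ is the unique $\gamma$-set and that it contains $v_{3k-1}=v_{n-1}$.

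Next I would handle $n=3k+2$, where $\gamma(P_n)=k+1$. Let $S$ be any $\gamma$-set containing $v_n$. By Claim~\ref{c1}, $S$ cannot contain $v_{n-1}$ as well, so $v_{n-1}\notin S$; hence $v_n$ covers only $v_{n-1}$ and $v_n$. Consequently the remaining $k$ vertices $S\setminus\{v_n\}$ must cover all of $v_1,\dots,v_{3k}$, and since $v_{n-1}\notin S$ the vertex $v_{3k}$ can only be covered from within $\{v_1,\dots,v_{3k}\}$. Thus $S\setminus\{v_n\}$ is a dominating set of size $k$ of the induced subpath $P_{3k}$ on $v_1,\dots,v_{3k}$; as $\gamma(P_{3k})=k$, it is a $\gamma$-set of that path. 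By the first part this $\gamma$-set is unique, so $S\setminus\{v_n\}$, and therefore $S$ itself, is uniquely determined, giving exactly one $\gamma$-set of $P_n$ containing $v_n$.

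The main obstacle is the uniqueness claim in the case $n=3k$: the counting argument readily shows that a minimum dominating set must be efficient and avoid both endpoints, but converting ``$S$ is forced to be efficient'' into a fully rigorous proof that $S$ is literally $\{v_2,v_5,\dots,v_{3k-1}\}$ requires care in the inductive step, particularly in checking that the peeled-off subpath inherits exactly the same hypotheses (size $k-1$ on $P_{3(k-1)}$) and that the endpoint bookkeeping near $v_3$ and $v_{3k}$ is correct. Once that induction is set up cleanly, both statements follow.
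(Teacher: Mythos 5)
Your proof is correct, and the second half (the $n=3k+2$ case) is exactly the paper's argument: use Claim~\ref{c1} to exclude $v_{n-1}$, observe that $v_{3k}$ must then be covered from the left, and reduce to the unique $\gamma$-set of $P_{3k}$. Where you genuinely diverge is the $n=3k$ case. The paper partitions the path into the $k$ triples $T_r=\{v_{3r+1},v_{3r+2},v_{3r+3}\}$, argues by pigeonhole that each triple contains exactly one vertex of $S$, and then runs a left-to-right forcing argument: the representative in $T_0$ must be $v_1$ or $v_2$, and choosing $v_1$ forces $S=\{v_1,v_4,\ldots,v_{3k-2}\}$, which leaves $v_{3k}$ uncovered, so the pattern $\{v_2,v_5,\ldots,v_{3k-1}\}$ is forced globally. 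You instead count coverage ($k$ vertices cover at most $3k$ vertices, so every vertex is covered exactly once), deduce that any $\gamma$-set of $P_{3k}$ is an efficient dominating set avoiding both endpoints, pin down $S\cap\{v_1,v_2,v_3\}=\{v_2\}$ locally, and peel off the first triple to induct on $k$. Both are sound; your route has the advantage of isolating a reusable structural fact (minimum dominating sets of $P_{3k}$ are perfect) and of re-deriving $\gamma(P_{3k})\ge k$ for free, while the paper's triple-partition machinery is not wasted either: the sets $T_r$ and the exactly-one-per-triple observation are reused verbatim in the proofs of parts (ii) and (iii) of Theorem~\ref{T1} and of Lemma~\ref{p1}, so the paper's choice sets up notation for the rest of the section. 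One small point common to both treatments: each establishes uniqueness of the $\gamma$-set containing the designated vertex but leaves implicit the (easy) verification that the candidate set actually dominates; if you write your induction up formally, a one-line existence check at the base and in the $n=3k+2$ conclusion would make the ``exactly one'' claim airtight.
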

\begin{proof}
	When $n=3k, k\ge 1$,  we have $\zeta(P_n)=k$ and the conclusion follows by the pigeon-hold argument alluded in Claim \ref{c1}. Each of the sets of $3$ consecutive vertices $\{v_{3r+1},v_{3r+2},v_{3r+3}\}$, $r=0,\ldots,k-1$, must contain at least one vertex of any $\gamma$-set $S$, and since $\gamma=k$, each of those sets must contain exactly one element of $S$. Since $S$ is dominating, the element of $S$ in $\{v_1,v_2,v_3\}$ must be $v_1$ or $v_2$. If $v_1$ then the ``next'' element of $S$ must be $v_4$, which forces $S=\{v_1,v_4,\ldots,v_{3k-2}\}$, leaving $v_{3k}$ uncovered. Therefore, the first element of $S$ must be $v_2$, which forces $S=\{v_2,v_5,\ldots,v_{3k-1}, v_{3k+2}\}$.
	
Suppose that $n=3k+2$, and let $S$ be a $\gamma$-set in $V(P_n)$. Then $|S|=k+1$. Suppose that $v_n\in S$. Then $v_{n-1}\notin S$, so $S-\{v_n\}$ must be a dominating set in $P_n-\{v_n,v_{n-1}\}=P_{3k}$. Since $|S-\{v_n\}|=k=\gamma(P_{3k})$ it follows from the previous argument that $S=\{v_2,v_5,\ldots, v_{3k-1}, v_{3k+2}\}$. 
	
\end{proof}
\begin{theorem}{\label{T1}}
	If $G$ is a path on $n$ vertices, then its dominion is \\
	
	$\zeta(G)=\begin{cases} 
	1, & \text{if} \ n \equiv 0\pmod3 \hspace{.5in} (i)\\
	\frac{(n+2)(n+11)}{18}-1, &  \text{if} \ n \equiv 1\pmod3 \hspace{.5in} (ii)\\
	{\lceil \frac{n}{3} \rceil}+1, & \text{if} \ n \equiv 2\pmod3. \hspace{.5in} (iii)
	\end{cases}$
\end{theorem}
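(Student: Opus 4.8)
The plan is to collapse all three cases into a single counting problem — the number of lattice points of a fixed box having a prescribed coordinate-sum — and then to evaluate that count in each residue class via a generating function. Write $m = \gamma(P_n) = \lceil n/3\rceil$ and record a size-$m$ subset by its increasing index list $S = \{v_{i_1},\dots,v_{i_m}\}$ with $i_1 < \cdots < i_m$.

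\emph{Step 1: a gap characterization.} First I would encode $S$ by the gaps $g_0 = i_1 - 1$, $g_j = i_{j+1} - i_j - 1$ for $1 \le j \le m-1$, and $g_m = n - i_m$. A telescoping computation gives $\sum_{j=0}^m g_j = n - m$ for every such $S$. I then claim that $S$ dominates $P_n$ if and only if $g_0, g_m \in \{0,1\}$ and $g_j \in \{0,1,2\}$ for $1 \le j \le m-1$: the endpoint bounds come from $i_1 \le 2$ and $i_m \ge n-1$ (needed to cover $v_1$ and $v_n$), while each interior bound comes from $i_{j+1} - i_j \le 3$, since a gap of $3$ leaves the middle vertex uncovered whereas a gap of at most $2$ does not. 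This makes $S \mapsto (g_0,\dots,g_m)$ a bijection between size-$m$ dominating sets and integer vectors obeying those box constraints with coordinate-sum $n-m$. Hence $\zeta(P_n) = [x^{\,n-m}]\,(1+x)^2(1+x+x^2)^{m-1}$, the two linear factors accounting for the endpoint gaps and the $m-1$ quadratic factors for the interior gaps.

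\emph{Step 2: reading off the coefficient.} I would exploit that this polynomial has degree $2m$ and is palindromic (a product of palindromic factors), so $[x^j] = [x^{2m-j}]$. When $n = 3k$ we have $m=k$ and $n-m = 2k = 2m$, the top degree, so the coefficient is $1$, proving (i). When $n = 3k+2$ we have $m = k+1$ and we want $[x^{2k+1}]$, which by symmetry equals $[x^1] = k+2 = \lceil n/3\rceil + 1$, proving (iii). When $n = 3k+1$ we have $m=k+1$ and we want $[x^{2k}] = [x^2]$; expanding $(1+x)^2(1+x+x^2)^k$ gives $[x^2] = \binom{k+1}{2} + 2k + 1 = \tfrac12(k^2 + 5k + 2)$, and substituting $k = (n-1)/3$ and simplifying should return $\tfrac{(n+2)(n+11)}{18} - 1$, proving (ii).

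The conceptual heart, and the step I expect to demand the most care, is the gap characterization of Step 1, where each box constraint must be matched exactly to a domination requirement; the borderline check (gap $3$ bad, gap $2$ fine) is where an off-by-one error would be easiest to make. By contrast the three coefficient extractions are routine, the only genuine algebra being the final simplification in case (ii). As a sanity check, this framework also re-derives the earlier claims: Claims \ref{c1} and \ref{c2} reappear as the observations that choosing both $v_{n-1},v_n$ forces $g_{m-1}=g_m=0$, capping the remaining gaps' total strictly below the required $n-m$, while omitting both forces $g_m \ge 2$, violating its bound; and the uniqueness in Claim \ref{c4} for $n=3k$ is exactly the observation that $n-m$ equals the top degree $2m$.
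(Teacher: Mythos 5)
Your proof is correct, and it takes a genuinely different route from the paper's. You collapse all three residue classes into a single counting problem: encode a candidate $\gamma$-set by its gap vector, check that domination is exactly the box constraint $g_0,g_m\in\{0,1\}$, $g_j\in\{0,1,2\}$ (your borderline check is right: an interior gap of $3$ leaves its middle vertex uncovered, a gap of $2$ does not), and read off $\zeta(P_n)=[x^{\,n-m}]\,(1+x)^2(1+x+x^2)^{m-1}$; palindromy then reduces each case to a low-order coefficient, and your algebra checks out, e.g. $[x^2]=\binom{k+1}{2}+2k+1=\tfrac{1}{2}(k^2+5k+2)$, which agrees with $\tfrac{(n+2)(n+11)}{18}-1$ at $n=3k+1$ and with the paper's own value. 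The paper instead argues structurally and case by case: part (i) by a pigeonhole/forcing argument pinning down the unique set $\{v_2,v_5,\ldots,v_{3k-1}\}$; part (iii) by explicitly listing the $k+1$ $\gamma$-sets containing $v_{n-1}$ together with the single one containing $v_n$ from Claim \ref{c4}; and part (ii) by a recursion, splitting the $\gamma$-sets of $P_{3k+1}$ into those containing $v_{3k+1}$ ($a_k$ of them) and those not ($b_k$), deriving $a_k=a_{k-1}+1$ and $b_k=a_{k-1}+b_{k-1}+1$, and solving. Your method buys uniformity and brevity — one bijection, one generating function, three coefficient extractions — and, as you note, it absorbs Claims \ref{c1}, \ref{c2} and \ref{c4} as immediate by-products. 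What the paper's method buys is explicit structural knowledge of the $\gamma$-sets themselves, which it reuses downstream: Lemma \ref{p1} and the cycle result (Corollary \ref{CC}, via Claims \ref{c5} and \ref{c6}) depend on knowing that the unique $\gamma$-set of $P_{3k}$ is $\{v_2,v_5,\ldots,v_{3k-1}\}$, information your count delivers only as the number $1$; if you wanted to feed your argument into those later results, you would extract that set from the unique maximal gap vector $(1,2,2,\ldots,2,1)$, which is easy but worth saying.
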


\begin{proof}
	Part (i). This follows directly from Claim \ref{c4}. %Let $G=P_{3k}$. Because $\gamma(G)=k$ and each vertex $v\in V(G)$ covers at most two other vertices, any $\gamma$-set, say $S$ must include $v_{3k-1}$. It follows from Claim \ref{c4} that $S$ is unique and $S=\{v_2,v_5,\ldots,v_{3k-1}\}$, which indicates that $\zeta(G)=1$, when $n \equiv 0\pmod3$. 
	\newline
	
	Part (iii).
   	Suppose that $n=3k+2$, $k\ge 1$, and $G=P_n$. We have $\gamma(G)=k+1$ and every $\gamma$-set $S\subseteq V(G)$ contains exactly one of $v_n$, $v_{n-1}$. From Claim \ref{c4} we know that exactly one such $S$ contains $v_n$, so our task is to show that exactly ${\lceil \frac{n}{3} \rceil}=k+1$ such $S$ contain $v_{n-1}$.
	Suppose that $S$ is a $\gamma$-set for $G$ containing $v_{n-1}$. By the reasoning in the proof of Claim \ref{c4}, each $3$-set $\ds T_r=\{v_{3r+1},v_{3r+2},v_{3r+3}\}$, $r=0,\ldots,k-1$, contains exactly one vertex of $S$. 
	\indent If $0\le r< k-1$ and $\ds S\cap T_r=\{v_{3r+1}\}$ then, because $S$ is dominating in $G$, $S\cap T_{r+1}=\{v_{3r+4}\}$; consequently, if $S\cap T_r=\{v_{3r+1}\}$ then $S\cap T_s=\{v_{3s+1}\}$, $r\le s \le k-1$. 
	
	\indent On the other hand, if $S\cap T_r=\{v_{3r+2}\}$ then $S\cap T_{r+1}$ is either $\{v_{3r+4}\}$ or $\{v_{3r+5}\}$. Since $S\cap T_{0}$ is either $\{v_1\}$ or $\{v_2\}$ (since $v_3$ does not cover $v_1$), it follows that the $\gamma$-set for $G=P_n$, $n=3k+2$, containing $v_{n-1}$ are the following:
	\begin{align}
	    S_0&=\{v_1,v_4,\ldots,v_{n-4},v_{n-1}\},\nonumber \\
	     S_1&=\{v_2,v_4,\ldots,v_{n-4},v_{n-1}\},\nonumber \\
	     \vdots \nonumber \\
	     S_r&=\{v_2,v_5,\ldots,v_{3r-1},v_{3r+1},\ldots,  v_{n-4},v_{n-1}\},\nonumber \\
	     \vdots \nonumber \\
	     S_k&=\{v_2,v_5,\ldots,v_{n-3},v_{n-1}\}, \nonumber
	\end{align}
and there are exactly $k+1$ of these sets.

	Part(ii)
	We recursively obtain the $\zeta$ values of $G_k=P_{3k+1}:=v_1-v_2-\ldots-v_{3k}-v_{3k+1}$ as we increase the value of $k\ge 1$. For simplicity, let $\zeta(G_k)=\zeta_k$, $k\ge 1$.
	
	When $k=1$, it is easy to verify that the $\gamma$-sets of $G_1$ are $\{v_1,v_3\}$, $\{v_1,v_4\}$,$\{v_2,v_3\}$, $\{v_2,v_4\}$. Hence $\zeta_1=4$.
	
	When $k=2$, it is clear that $\gamma(G_2)=3$. Since $v_6$ covers $v_5$ and $v_7$, we add $v_6$ to each of the $\gamma$-sets of $G_1$ to get $\{v_1,v_3, v_6\}$, $\{v_1,v_4, v_6\}$, $\{v_2,v_3, v_6\}$, $\{v_2,v_4, v_6\}$. Further, we add $v_7$ to each $\gamma$-set of $G_1$ that includes $v_4$ since $v_4$ covers $v_5$, and $v_7$ covers $v_6$. This produces two additional $\gamma$-sets for $G_2$, namely $\{v_1,v_4, v_7\}, \{v_2,v_4, v_7\}$. Finally, with the new and unique set $\{v_2, v_5\}$ which covers all other vertices except $v_7$, we generate two more $\gamma$-sets of $G_2$ using $v_6$ for one and $v_7$ for the other. It is clear that $v_6$ covers $v_7$ (and vice-versa), giving  $\{v_2,v_5, v_6\}, \{v_2, v_5, v_7\}$. Hence $\zeta_2=8$.
	
	When $k=3$, it is also clear that $\gamma(G_3)=4$. We use a very similar construction as in the case when $k=2$. Since $v_9$ covers $v_8$ and $v_{10}$, we add $v_9$ to each of the $8$ $\gamma$-sets of $G_2$ to get 
	$\{v_1,v_3, v_6, v_9\}$, $\{v_1,v_4, v_6, v_9\}$, $\{v_2,v_3, v_6, v_9\}$, $\{v_2,v_4, v_6, v_9\}$, $\{v_1,v_4, v_7, v_9\}, \{v_2,v_4, v_7, v_9\}$, $\{v_2,v_5, v_6, v_9\}, \{v_2, v_5, v_7, v_9\}$
	. Further, we add $v_{10}$ to each $\gamma$-set of $G_2$ that includes $v_7$ since $v_7$ covers $v_8$, and $v_{10}$ covers $v_{9}$. This produces three additional $\gamma$-sets for $G_3$, namely $\{v_1,v_4, v_7, v_{10}\}, \{v_2,v_4, v_7, v_{10}\}, \{v_2, v_5, v_7, v_{10}\}$. Finally, with the unique set $\{v_2, v_5, v_8\}$ which covers all other vertices except $v_{10}$, we generate two more $\gamma$-sets of $G_3$ using $v_9$ for one and $v_{10}$ for the other. Thus, $v_9$ covers $v_{10}$ (and vice-versa) giving  $\{v_2,v_5, v_8, v_9\}, \{v_2, v_5, v_8, v_{10}\}$. Hence $\zeta_3=13$. Now, we generalize the previous constructions for all $k\ge 2$.
	
	It is obvious that $\gamma(G_k)=k+1$. So, for $k\ge 2$, we repeatedly form the $\gamma$-sets of $G_k$ by extending those of $G_{k-1}$ as follows:
	
	\begin{enumerate}
		\item[(a)] Since $v_{3k}$ covers both $v_{3k-1}$ and $v_{3k+1}$, we add $v_{3k}$ to each $\gamma$-set of $G_{k-1}$.
		\item[(b)] We add $v_{3k+1}$ to each $\gamma$-set of $G_{k-1}$ that includes $v_{3k-2}$ since $v_{3k-2}$ covers $v_{3k-1}$, and $v_{3k+1}$ covers $v_{3k}$.
		\item[(c)] Given the unique set $\{v_2,v_5,\ldots,v_{3k-1}\}$ which covers all other vertices of $G_k$ except $v_{3k+1}$, we generate two more $\gamma$-sets of $G_k$ using $v_{3k}$ for one and $v_{3k+1}$ for the other. Clearly $v_{3k}$ covers $v_{3k+1}$ (and vice-versa) and we have $\{v_2,v_5,\ldots,v_{3k-1}, v_{3k}\}$ and $\{v_2,v_5,\ldots,v_{3k-1}, v_{3k+1}\}$. 
	\end{enumerate}
	From steps (a)-(c), we generate a sequence of $\zeta_k$ values, with $k\ge 1$, giving 
	$$4, 8, 13, 19, 26, 34, 43, \ldots  $$
	
	For $k\ge 1$ let $A_k=\{S\subseteq V(G_k)\ |\ S \ \text{is a}\ \gamma-\text{set in}\ G_k \ \text{and} \ v_{3k+1} \in S\}$,
	 $B_k=\{S\subseteq V(G_k)\ |\ S \ \text{is a}\ \gamma-\text{set in}\ G_k \ \text{and} \ v_{3k+1} \notin S\}$, and $a_k=|A_k|, b_k=|B_k|$. Then $\zeta(G_k)=a_k+b_k$. Notice that $S\in B_k \implies v_{3k}\in S$. If $S\in B_k$, there are $3$ possibilities, if $k\ge 2$:
	 \begin{enumerate}
	     \item $ v_{3k-1}, v_{3k-2} \notin S,$ in which case $S\setminus \{v_{3k}\}\in B_{k-1}.$
	      \item $ v_{3k-1} \in S.$ 
	      \item $ v_{3k-2} \in S.$ 
	 \end{enumerate}
	 
	\indent It is not possible that $\{  v_{3k-2}, v_{3k-1},  v_{3k}\subseteq S \in B_k\}$, because this inclusion would imply that $S\setminus \{v_{3k-1},  v_{3k}\} $ is a dominating set in $G_{k-1}$, which is impossible, since $|S\setminus \{v_{3k-1},  v_{3k}\}|=k-1<\gamma(G_{k-1})=k$. Therefore, for $S\in B_k$, $2$ and $3$ are disjoint possibilities.
	
	\indent In case $2$ we see that $S'=S\setminus \{v_{3k-1},  v_{3k}\}$ is a dominating set in $P_{3(k-1)}$, with $k-1=\gamma(P_{3(k-1)})$ vertices, and we know that there is only one such set, $S'=\{v_j\ | \ j\equiv 2\pmod 3, 2\le j \le 3k-4\}$.
	
	\indent In case $3$, $S\setminus \{  v_{3k}\}\in A_{k-1}$. Furthermore, if $S' \in A_{k-1}$ then $S'\cup\{v_{3k}\} \in B_k$. Similar converses apply to cases $1$ and $2$: in case $2$, if $S'$ is the unique $\gamma$-set in $P_{3(k-1)}$ then $S'\cup \{v_{3k-1},  v_{3k}\} \in B_k$, and in case 1, if $S' \in B_{k-1}$ then $S\cup \{v_{3k}\} \in B_k$. Putting these observations together, we have that $b_k=a_{k-1}+b_{k-1}+1 \ \ \ (*)$.% $\tag{$\star$}$ %{\label{eq}}$
	
	\indent If $S\in A_k$ then $|S\setminus \{v_{3k+1}\}|=k$; since each set $T_r$, $r=0, \ldots, k-1$ must contain at least one vertex of $S\setminus \{v_{3k+1}\}$, it follows that each $T_r$ contains exactly one vertex of $S$. It follows that $v_{3r+3}\notin S$, $r=o\ldots k-1$, for if $v_{3r+3}\in S$ then, because there cannot be more than $2$ vertices between consecutive elements of $S$, it would follow that $v_{3s+3}\in S$, $0\le s \le r$. But then $v_3$ is the sole representative of $S$ in $T_0$, which contradicts the premise that $S$ is a $\gamma$-set in $G_k$. 
	
	\indent Thus, $S\in A_k$ implies $v_{3k}\notin S$, so the sole vertex in $S\cap T_{k-1}$ is either $v_{3k-1}$ or $v_{3k-2}=v_{3(k-1)+1}$. If $v_{3k-1} \in S$, then $S\setminus \{v_{3k+1}\}$ is a $\gamma$-set in $P_{3k},$ so $S=\{v_t\ | \ t=2\pmod 3, \ 2\le t\le 3k-1\} \cup \{v_{3k+1}\}$. If $v_{3k-2} \in S$, then $S\setminus \{v_{3k+1}\} \in A_{k-1}$; conversely, if $S' \in A_{k-1}$ then $v_{3k-2} \in S'$ and $S'\cup \{v_{3k-2}\}\in A_k$.
	
	\indent From the preceding paragraph we conclude that, for $k\ge 2$, $a_k=1+a_{k-1}$. Since $a_1=2$, it follows that $a_k=k+1$, $k=1,2, \ldots$
	
	\indent From $(*)$, we have that, for $k\ge 2$,
	\begin{align}
	    b_k&=a_{k-1}+b_{k-1}+1 =b_{k-1}+k+1 \nonumber \\
	    &= b_1+[(k+1)+k+\ldots + 3] \nonumber \\
	    &=2+\frac{(k+2)(k+1)}2-3 \nonumber \\
	    &=\frac{k^2+3k}2. \nonumber
	\end{align} 
	Therefore, for $k\ge 2$,
	\begin{align}
	    \zeta(G_k)&=a_k+b_k=k+1+\frac{k^2+3k}2 \nonumber \\
	    &=\frac{k^2+5k+2}2 = \frac{(n+2)(n+11)}{18}-1 \nonumber
	\end{align}
	if $n=3k+1$.

% 	It is not too hard to see that the sequence follows the recursion:
% 	\begin{equation}\label{eq1}
% 	\zeta_{3k+1}=\zeta_{3(k-1)+1}+\zeta_{3(k-1)+2}+ \zeta_{3(k-1)+3}, \tag{$\star$}
% 	\end{equation} 
% 	where $\zeta_{3(k-1)+1}=\zeta(G_{k-1})$, the dominion of $G_{k-1}$, and $\zeta_{3(k-1)+3}=\zeta(P_{3k})=1$, $\zeta_{3(k-1)+2}=\zeta(P_{3(k-1)+2})={\lceil \frac{3(k-1)+2}{3} \rceil}+1$, according to Parts (i), (iii), respectively. % Using a numerical approximation, we obtain a general form of the sequence whose first seventh terms are shown 
% 	From the recursion (\ref{eq1})  we obtain that $\ds \zeta_k=\frac{1}2k^2+\frac{5}2k+1=\frac{(k+1)(k+4)}2-1$. Now, since $\ds k=\frac{n-1}3$, the result follows from a direct substitution.
	
\end{proof}
\begin{lemma}{\label{p1}}
	If $n=3k+2$ then no $\gamma$-set of $P_n$ includes the vertex $v_3$ when $n=3k+2$. \end{lemma}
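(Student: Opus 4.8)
The plan is to reduce everything to the first triple $T_0 = \{v_1, v_2, v_3\}$, using the structural fact — already extracted in the proof of Claim \ref{c4} and reused in Part (iii) of Theorem \ref{T1} — that every $\gamma$-set $S$ of $P_n$ with $n = 3k+2$ meets each of the disjoint $3$-sets $T_r = \{v_{3r+1}, v_{3r+2}, v_{3r+3}\}$, $r = 0, \ldots, k-1$, in exactly one vertex.

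First I would recall why that ``one vertex per triple'' fact holds, since the whole argument hinges on it. The middle vertex $v_{3r+2}$ of $T_r$ has closed neighborhood exactly $T_r$, so $S$ is forced to intersect every $T_r$; these $k$ triples are pairwise disjoint, and the pair $\{v_{n-1}, v_n\}$ (the closed neighborhood of the endpoint $v_n$) is disjoint from all of them while also having to meet $S$. This already accounts for at least $k+1$ vertices, and since $\gamma(P_n) = k+1$ there is no room for a second vertex in any triple: each $T_r$ carries exactly one element of $S$.

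Granting this, I would argue by contradiction. Suppose some $\gamma$-set $S$ contains $v_3$. Then $v_3$ is the unique element of $S$ lying in $T_0$, so $v_1 \notin S$ and $v_2 \notin S$. But the endpoint $v_1$ has closed neighborhood $\{v_1, v_2\}$, so with neither of these vertices in $S$ the vertex $v_1$ is left uncovered, contradicting that $S$ is dominating. Hence no $\gamma$-set contains $v_3$.

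I do not expect a genuine obstacle: all the force is carried by the already-proven partition fact, and the finishing step is the same endpoint-coverage observation used in Claim \ref{c2}. The only points requiring care are to state the hypothesis $k \ge 1$ so that $T_0$ is a full interior triple of the path, and to phrase the final step so that it is clear $v_3$ being the sole representative of $T_0$ is precisely what kills the domination of the left endpoint. A self-contained alternative, should one wish to avoid invoking Claim \ref{c4}, is the pigeonhole/slack count of Claim \ref{c1}: the $k+1$ vertices of $S$ cover at most $3(k+1) = 3k+3$ vertices against $3k+2$ present, leaving slack one, whereas placing $v_3 \in S$ together with the $v_1$ or $v_2$ needed to cover the endpoint forces overlap of weight at least two, again a contradiction.
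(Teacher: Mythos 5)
Your proof is correct and follows essentially the same route as the paper's: both establish that each disjoint triple $T_r$ must meet $S$ at least once, use the endpoint pair $\{v_{n-1},v_n\}$ together with $|S|=k+1$ to force exactly one vertex of $S$ per triple, and then observe that $v_3$ as the sole representative of $T_0$ leaves $v_1$ uncovered. The only cosmetic difference is that you justify the ``at least one per triple'' step via the closed neighborhood of the middle vertex $v_{3r+2}$, while the paper phrases it as ``no three consecutive vertices can avoid $S$'' --- the same observation in different words.
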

\begin{proof}
	Suppose that $k\ge 1$ and $n=3k+2$. As with any path or cycle, if $S\subseteq V(P_n)$ is dominating then $V(P_n)\setminus S$ cannot contain 3 ``consecutive'' vertices, $v_{i-1}$, $v_i$, $v_{i+1}$. Therefore, for each such $S$, each set $\ds T_r=\{v_{3r+1},v_{3r+2},v_{3r+3}\}$, $r=0,\ldots, k-1$, contains at least one vertex of $S$.
	
	\indent Now suppose that $S$ is a $\gamma$-set in $P_{n}$. Then, one of $v_{3k+1}, v_{3k+2}$ is in $S$; otherwise, $v_{3k+2}$ would not be covered. Since $|S|=3k+1$, it must be that each $T_r$ contains exactly one element of $S$. Therefore, if $v_3\in S$, then  $v_3$ is the sole representative of $S$ in $T_r$. But then $v_1$ is not covered by $S$.
\end{proof}
\begin{corollary}{\label{CC}}
If $G$ is a cycle on $n=3k$ vertices, then its dominion is
$\zeta(G)=3$.
\end{corollary}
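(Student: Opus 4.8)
The plan is to exploit the fact that a cycle on $n=3k$ vertices is \emph{perfectly} (efficiently) dominated by any of its minimum dominating sets. First I would record the standard fact that $\gamma(C_n)=\lceil n/3\rceil$, so for $n=3k$ every $\gamma$-set $S$ has $|S|=k$. Labelling the vertices $v_1,\dots,v_{3k}$ cyclically, each closed neighbourhood $N[v_i]$ has exactly three vertices, whence $\sum_{s\in S}|N[s]|=3k=|V(C_{3k})|$. Since $S$ is dominating we have $\bigcup_{s\in S}N[s]=V(C_{3k})$, and the equality of this union's size with the sum $3k$ forces the closed neighbourhoods of the elements of $S$ to be pairwise disjoint. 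This no-overlap (efficiency) step is the heart of the argument, and I expect it to be the main obstacle to state cleanly: one must argue carefully that any coincidence $N[s]\cap N[s']\neq\varnothing$ would drop the size of the union strictly below $3k$, contradicting that the $k$ triples must already exhaust all $3k$ vertices.

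With disjointness established the remaining steps are routine. Pairwise disjoint closed neighbourhoods mean any two elements of $S$ are at cyclic distance at least $3$; since there are exactly $k$ of them on a cycle of circumference $3k$, all $k$ gaps between consecutive chosen vertices must equal $3$. Phrasing this through the triples $T_r=\{v_{3r+1},v_{3r+2},v_{3r+3}\}$, $r=0,\dots,k-1$ (as in the proofs of Claim \ref{c4} and Lemma \ref{p1}), each $T_r$ meets $S$ in exactly one vertex, and the no-overlap property forces that vertex to occupy the \emph{same} position inside every triple. Hence $S$ must be one of the three residue classes $\{v_i : i\equiv 1\pmod 3\}$, $\{v_i : i\equiv 2\pmod 3\}$, and $\{v_i : i\equiv 0\pmod 3\}$.

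Finally I would package this as a case analysis on which of $v_1,v_2,v_3$ lies in $S$ (exactly one does, by the triple argument), showing that each choice propagates deterministically around the cycle to a single admissible set. Alternatively, once $v_1\in S$ is fixed, deleting $v_{3k},v_1,v_2$ leaves the path $P_{3(k-1)}$ that $S\setminus\{v_1\}$ must dominate with $k-1=\gamma(P_{3(k-1)})$ vertices, and Claim \ref{c4} (equivalently Theorem \ref{T1}(i)) guarantees this sub-$\gamma$-set is unique, pinning $S$ down completely. Each of the three residue classes is immediately checked to be dominating, and they are plainly distinct, so $\zeta(C_{3k})=3$. The degenerate case $k=1$ (the triangle $C_3$, where every single vertex dominates) should be noted separately, since there the three $\gamma$-sets are simply the three singletons.
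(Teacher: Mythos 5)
Your proof is correct, but it takes a genuinely different route from the paper. The paper treats the statement as a true corollary of Theorem \ref{T1}(i): it first observes (Claim \ref{c6}) that the unique $\gamma$-set $\{v_2,v_5,\ldots,v_{3k-1}\}$ of $P_{3k}$ and its two cyclic rotations give $\zeta(C_{3k})\ge 3$, and then, for the upper bound, notes that any $\gamma$-set $S$ of $C_{3k}$ (having only $k$ of the $3k$ vertices) must miss two consecutive vertices; cutting the cycle between them shows $S$ dominates a copy of $P_{3k}$, so by the uniqueness in Theorem \ref{T1}(i) it must be one of the three rotations. Your argument instead is the classical efficient-domination count: $\sum_{s\in S}|N[s]|=3k=|V(C_{3k})|$ together with $\bigcup_{s\in S}N[s]=V(C_{3k})$ forces the closed neighbourhoods to be pairwise disjoint, hence all $k$ cyclic gaps equal $3$ and $S$ is one of the three residue classes modulo $3$. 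Your no-overlap step is sound (equality in the union bound is exactly pairwise disjointness), and the conclusion follows since each residue class is visibly dominating. What your approach buys is self-containment and extra information: it needs none of the path machinery and shows in passing that every $\gamma$-set of $C_{3k}$ is a perfect dominating set. What the paper's approach buys is economy within its own framework --- the corollary literally reuses Theorem \ref{T1}, and the cut-the-cycle trick (finding consecutive vertices outside $S$ and reducing to a path) is the technique one would naturally try to push toward the $n\equiv 1,2 \pmod 3$ cases that the paper leaves as Conjecture \ref{cj1}; your counting argument, by contrast, is rigid and works only because $3$ divides $n$. Your alternative closing route via Claim \ref{c4} (delete $v_{3k},v_1,v_2$ and invoke uniqueness on $P_{3(k-1)}$) is essentially the paper's upper-bound argument in disguise, so you in fact sketched both proofs.
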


% 	If $G$ is a cycle on $n\ge 3$ vertices, then its dominion is \\
	
% 	$\zeta(G)=\begin{cases} 
% 	3, & \text{if} \ n \equiv 0\pmod3 \\
% 	\frac{(n+2)(n+11)}{18}+1, &  \text{if} \ n \equiv 1\pmod3 \\
% 	{\lceil \frac{n}{3} \rceil}+3, & \text{if} \ n \equiv 2\pmod3.
% 	\end{cases}$
% \end{theorem}
\begin{proof}
	Throughout we obtain $G:=C_n$ from a path $P_{n}:=v_1-v_2-\ldots-v_{n-1}-v_{n}$ by connecting $v_1$ to $v_n$, i.e., $C_{n}:=v_1-v_2-\ldots-v_{n-1}-v_{n}-v_1$.
	\begin{claim}{\label{c5}}
		Every $\gamma$-set of $P_n$ is a $\gamma$-set of $C_n$, if $n=3k$.
	\end{claim}
	\begin{proof}
		The claim is straightforward from the fact that $\gamma(P_{3k})=\gamma(C_{3k})=k$, and the fact that any dominating set in $P_n$ will be dominating in $C_n$.
	\end{proof}

	\begin{claim}{\label{c6}}
		$\zeta(C_{3k})=\zeta(P_{3k})+2 = 3$ for all $k\ge 1$.  
	\end{claim}
	\begin{proof}
	The unique $\gamma$-set in $P_{3k}$, $S=\{v_2,v_5,\ldots,v_{3k-1}\}$, is also a $\gamma$-set of $C_{3k}$. Therefore, the $2$ rotations, say, clockwise, of this set by $1$ and $2$ places, are also $\gamma$-sets in $C_{3k}$, and are distinct subsets of $\{v_1,\ldots,v_{3k}\}$. Further rotations will just duplicate the $3$ sets already discovered. Thus, $\gamma(C_{3k})\ge 3$ and to show equality it suffices to show that any $\gamma$-set $S$ of vertices of  $C_{3k}$ must be one of these $3$:
	
	If $S$ is a $\gamma$-set in $C_{3k}$ then $|S|=k$. Then at least two consecutive vertices $u, w \in V(C_{3k}) \setminus S$. Let us rename the vertices around the cycle so that $u=v_{3k}$ and $w=v_1$. Since neither $v_1, v_{3k}$ is in $S$, yet $S$ is a dominating set in $C_{3k}$, it must be that $S$ is a dominating set in the path $P_{3k}:=v_1-\ldots-v_{3k}$. Therefore  $S=\{v_2,v_5,\ldots,v_{3k-1}\}$, which, with respect to any other naming of the consecutive vertices round $C_{3k}$, is one of those $3$ $\gamma$-sets in $C_{3k}$ already discovered.

	\end{proof}
	
	The result follows from Theorem \ref{T1} and Claim \ref{c6}.
	
\end{proof}

\begin{remark}

\end{remark}
The cases when $n=3k+1$ and $n=3k+2$ turn out to be more difficult to establish and perhaps require a novel technique. However, using Lagrange interpolation on some early recurring $\zeta$-values, we present a conjecture.

\begin{conjecture}{\label{cj1}}
	If $G$ is a cycle on $n\ge 3$ vertices, then its dominion is \\
	
	$\zeta(G)=\begin{cases} 
	\frac{n^2+5n}{6}, & \text{if} \ n \equiv 1\pmod3 \\
	n, &  \text{if} \ n \equiv 2\pmod3. \\
		\end{cases}$
\end{conjecture}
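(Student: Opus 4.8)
The plan is to count the $\gamma$-sets of $C_n$ directly, by encoding each as a cyclic sequence of ``gaps'' and reducing the count to a composition problem. First I would record the domination characterization used implicitly in Claim~\ref{c6}: a subset $S\subseteq V(C_n)$ is dominating if and only if, reading clockwise around the cycle, every maximal run of unchosen vertices has length at most $2$. Writing $m=\gamma(C_n)=\lceil n/3\rceil$ (the lower bound $|S|\ge n/3$ coming from the fact that each vertex of $S$ covers at most $3$ vertices), a $\gamma$-set is precisely a dominating set of this minimum size. Associating to each chosen vertex the block consisting of itself together with the run of unchosen vertices immediately following it clockwise, a $\gamma$-set becomes a cyclic arrangement of exactly $m$ blocks whose lengths lie in $\{1,2,3\}$ and sum to $n$.

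The combinatorial heart is a pointer argument that avoids the usual difficulties of cyclic symmetry. I would count the pairs $(S,v)$ with $S$ a $\gamma$-set of $C_n$ and $v\in S$. Counting by $S$ first gives $m\cdot\zeta(C_n)$. Counting the other way, I would cut the cycle at $v$: the pair $(S,v)$ is equivalent to a choice of the vertex $v$ (one of the $n$ vertices of $C_n$) together with the linear sequence of block lengths read clockwise from $v$, which is an ordered composition of $n$ into $m$ parts each in $\{1,2,3\}$. This correspondence is a bijection, so $m\cdot\zeta(C_n)=n\cdot C(n,m)$, where $C(n,m)$ denotes the number of such compositions; hence $\zeta(C_n)=\frac{n}{m}\,C(n,m)$.

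It then remains to evaluate $C(n,m)$. If $a,b,c$ count the blocks of length $1,2,3$, then $a+b+c=m$ and $a+2b+3c=n$, so $b+2c=n-m$, and the nonnegativity of $a,b,c$ leaves only one or two admissible type vectors. For $n=3k+1$ and $m=k+1$ the survivors are $(a,b,c)=(1,0,k)$ and $(0,2,k-1)$, contributing $\frac{(k+1)!}{1!\,0!\,k!}+\frac{(k+1)!}{0!\,2!\,(k-1)!}=(k+1)+\binom{k+1}{2}$; substituting into $\zeta=\frac{n}{m}C$ gives $\frac{(3k+1)(k+2)}{2}=\frac{n^2+5n}{6}$. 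For $n=3k+2$ and $m=k+1$ the unique survivor is $(0,1,k)$, so $C(n,m)=k+1$ and $\zeta(C_n)=\frac{3k+2}{k+1}(k+1)=n$. The same computation with $n=3k$, $m=k$ yields the single type $(0,0,k)$ and recovers $\zeta(C_{3k})=3$ of Corollary~\ref{CC}, a useful consistency check.

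The step I expect to be the main obstacle is making the pointer bijection fully rigorous: I must verify that cutting at a chosen vertex is genuinely invertible and that no overcounting occurs, especially in the presence of length-$1$ blocks (adjacent chosen vertices, which do occur in the $(1,0,k)$ type). Once the identity $m\,\zeta(C_n)=n\,C(n,m)$ is established, the rest is routine algebra; the only remaining care is to confirm that every gap-constrained subset of size $\lceil n/3\rceil$ is indeed a $\gamma$-set, so that the block encoding captures all $\gamma$-sets and nothing more.
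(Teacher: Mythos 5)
Your proposal is correct, and it is worth emphasizing that it does strictly more than the paper: the paper offers no proof of this statement at all. It appears only as Conjecture~\ref{cj1}, with the authors remarking that the cases $n\equiv 1,2\pmod 3$ ``perhaps require a novel technique'' and that the formulas were obtained by Lagrange interpolation on small $\zeta$-values. Your double count of pairs $(S,v)$ with $v\in S$ is exactly such a technique, and it holds up: cutting at $v$ gives a genuine bijection between pairs $(S,v)$ and pairs (vertex, composition of $n$ into $m=\lceil n/3\rceil$ parts from $\{1,2,3\}$), since reading block lengths clockwise from $v$ determines the composition and, conversely, $v$ together with the composition reconstructs $S$; length-$1$ blocks cause no collision because the marked vertex is always the initial (chosen) vertex of its block, so the map and its inverse are well defined regardless of adjacencies inside $S$. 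The type-vector computation is also right: for $n=3k+1$, $m=k+1$, the constraints $a+b+c=m$, $b+2c=n-m=2k$ leave only $(a,b,c)\in\{(1,0,k),(0,2,k-1)\}$, giving $C(n,m)=(k+1)+\binom{k+1}{2}$ and $\zeta=\frac{n}{m}C(n,m)=\frac{(3k+1)(k+2)}{2}=\frac{n^2+5n}{6}$; for $n=3k+2$ the unique type $(0,1,k)$ gives $\zeta=n$; and the $n=3k$ case recovers $\zeta=3$, matching Corollary~\ref{CC}. By contrast, the paper's only proved cycle result (Corollary~\ref{CC}) rests on the rigidity of the unique $\gamma$-set of $P_{3k}$, an argument that cannot extend to the other residues precisely because those $\gamma$-sets are far from unique; your orbit-counting identity $m\,\zeta(C_n)=n\,C(n,m)$ handles all three residues uniformly and, written up with the bijection spelled out, would upgrade the conjecture to a theorem.
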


% \end{theorem}

%%%%%%%%%%%%%%%%%%%%%%%%%%%%%%%%%%%%%%%%%%%%%%%%%%%%%%%%%%%%%%%%%%%%%%%
\section{Dominion of Join of Graphs }
There is no obvious comparative bound between $\gamma(G)$ and $\zeta(G)$ because, while the former determines the size of the minimum dominating set, the latter counts such sets. Yet, with the few upcoming results, we establish some relationships between these two parameters, in particular using the \textit{join} operation. Recall, the join of $G_1$ and $G_2$, commonly denoted by $G = G_1 \vee G_2$,  is the graph obtained from the disjoint union of $G_1$ and $G_2$ by adding all edges between $V(G_1)$ and $V(G_2)$.

\begin{proposition}{\label{Prop2}}
	For any  connected graph $G$ on $n$ vertices, $1\le \zeta(G)\le {n\choose \gamma(G)}$.
	
\end{proposition}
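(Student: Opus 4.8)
The plan is to prove the two inequalities independently, since each follows from an elementary observation about the defining family of $\gamma$-sets. Recall that $\zeta(G)$ counts exactly those dominating sets whose cardinality equals $\gamma(G)$, so every object being counted is a subset of $V$ of a fixed size $\gamma(G)$.

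For the lower bound $\zeta(G)\ge 1$, I would argue that the collection of dominating sets of $G$ is never empty: the full vertex set $V$ is trivially a dominating set, so $\gamma(G)$ is well defined as the minimum cardinality over a nonempty family, and at least one dominating set attains this minimum. That attaining set is a $\gamma$-set, hence $\zeta(G)\ge 1$. (Connectivity plays no role here, but it is harmless to assume it.)

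For the upper bound, the key step is to note that, by definition, every $\gamma$-set is a subset $S\subseteq V$ with $|S|=\gamma(G)$. The family of \emph{all} subsets of $V$ of cardinality $\gamma(G)$ has size exactly $\binom{n}{\gamma(G)}$, and the $\gamma$-sets form a subfamily of this family (they are precisely the $\gamma(G)$-element subsets that happen also to be dominating). Therefore their number cannot exceed the total count of $\gamma(G)$-element subsets, giving $\zeta(G)\le\binom{n}{\gamma(G)}$.

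I do not anticipate any genuine obstacle, as the statement reduces to the observation that $\gamma$-sets are a distinguished subfamily of the fixed-size subsets of $V$ and that at least one such set always exists. If anything, the only point worth flagging explicitly is that the bounds are tight at both extremes (the lower bound is met, for instance, by the star $K_{1,n}$ discussed earlier, and the upper bound by the complete graph $K_n$, where every singleton is a $\gamma$-set so $\zeta(K_n)=n=\binom{n}{1}$), which one could mention to motivate why no sharper universal bound is available.
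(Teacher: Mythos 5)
Your proposal is correct and follows essentially the same route as the paper: the paper also observes that every $\gamma$-set is a selection of $\gamma(G)$ vertices from $V(G)$, giving the upper bound $\binom{n}{\gamma(G)}$, with the lower bound coming from the existence of at least one $\gamma$-set, and it cites the same tightness examples (stars and complete graphs). Your write-up merely makes explicit the existence argument (via $V$ itself being a dominating set) that the paper leaves implicit.
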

These trivial bounds follow from the fact that any $\gamma$-set is some selection from $V(G)$ and  $\zeta$ counts such sets of size $\gamma(G)$. The lower bound is tight for stars while the upper bound is tight for complete graphs.
% \begin{proof}
% Because 
% \end{proof}
%%%%%%%%%%%%%%%%%%%%%%%%%%%%%%%%%%%%%%%%%%%%%%%%%%%%%%

\begin{theorem}{\label{T3}}
	Suppose $G_1$ and $G_2$ are two connected graphs with $2\le \gamma(G_1)\le \gamma(G_2)$. Then $\zeta(G_1\vee G_2)\ge |V(G_1)|\cdot|V(G_2)|$.
\end{theorem}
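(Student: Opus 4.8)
The plan is to first pin down the domination number of the join and then exhibit a large, easily-counted family of dominating sets of that minimum size. The first step is to establish that $\gamma(G_1\vee G_2)=2$. For the upper bound I would pick any $u\in V(G_1)$ and any $w\in V(G_2)$: because the join inserts every edge between $V(G_1)$ and $V(G_2)$, the vertex $u$ is adjacent to all of $V(G_2)$ and $w$ is adjacent to all of $V(G_1)$, so $\{u,w\}$ covers every vertex and thus $\gamma(G_1\vee G_2)\le 2$. For the matching lower bound I would rule out a dominating singleton: a vertex $x\in V(G_1)$ covers all of $V(G_2)$ but only its $G_1$-neighbours inside $V(G_1)$, so if $\{x\}$ were dominating then $x$ would be universal in $G_1$, forcing $\gamma(G_1)=1$ and contradicting $\gamma(G_1)\ge 2$; the symmetric case $x\in V(G_2)$ contradicts $\gamma(G_2)\ge 2$. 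Hence $\gamma(G_1\vee G_2)=2$.

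The second step is the counting. Having fixed $\gamma=2$, I would observe that every \emph{cross pair} $\{u,w\}$ with $u\in V(G_1)$ and $w\in V(G_2)$ is a dominating set, by exactly the covering argument used for the upper bound above. These pairs are pairwise distinct, and there are precisely $|V(G_1)|\cdot|V(G_2)|$ of them, so they already furnish this many $\gamma$-sets. Since $\zeta(G_1\vee G_2)$ counts all $\gamma$-sets, including any that lie entirely inside $V(G_1)$ or entirely inside $V(G_2)$ (which I simply do not need to enumerate), this immediately yields the stated lower bound $\zeta(G_1\vee G_2)\ge |V(G_1)|\cdot|V(G_2)|$.

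The argument is short, so I do not expect a serious obstacle; the only point that genuinely uses the hypotheses is the lower bound $\gamma(G_1\vee G_2)\ge 2$, where the assumption $\gamma(G_1),\gamma(G_2)\ge 2$ is exactly what forbids a universal vertex on either side and prevents $\gamma$ from collapsing to $1$ (which would invalidate the entire count). The clean structural fact driving everything is that in a join each part is completely adjacent to the other, so a single vertex automatically dominates the opposite part, and the two requirements then decouple across the two sides.
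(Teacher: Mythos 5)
Your proof is correct and takes essentially the same approach as the paper: both arguments count the $|V(G_1)|\cdot|V(G_2)|$ cross pairs $\{u,w\}$ with $u\in V(G_1)$, $w\in V(G_2)$ as $\gamma$-sets of the join. The only difference is that you explicitly verify the lower bound $\gamma(G_1\vee G_2)\ge 2$ (using $\gamma(G_1),\gamma(G_2)\ge 2$ to rule out a dominating singleton), a step the paper leaves implicit when it asserts that each cross pair is a $\gamma$-set.
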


\begin{proof}
	%Since any vertex $v_1\in V(G_1)$ also covers any other vertex $v_2\in V(G_2)$ (and vice-versa), it is clear that any  $\gamma$-set of $G_1$ is also a $\gamma$-set of 
	Suppose $G=G_1\vee G_2$ with $2\le \gamma(G_1)\le \gamma(G_2)$. Consider any pair $\{u,v\}$ with $u\in V(G_1)$, $v\in V(G_2)$. It follows that it is a $\gamma$-set of $G$, since $u$ covers $V(G_2)$ and $v$ covers $V(G_1)$, giving the bound. 
\end{proof}
The previous argument also establishes that $1\le \gamma(G_1\vee G_2)\le 2$, given any two graphs $G_1$, $G_2$.

\begin{theorem}{\label{T4}}
	Suppose $G_1$ and $G_2$ are two connected graphs with $\gamma(G_1)\le \gamma(G_2)$. Then  \\
	$\gamma(G_1\vee G_2)=\begin{cases} 
	1 &  \text{if} \  \gamma(G_1)=1  \\
	2 & \text{if}  \ \gamma(G_1)\ge 2.
	\end{cases}$
	\\
	Moreover,
	\\
	$\zeta(G_1\vee G_2)=\begin{cases} 
	\zeta(G_1)+\zeta(G_2)+ \begin{cases} 
	0 & \text{if} \  \gamma(G_1)=1=\gamma(G_2) \\
	|V(G_1)|\cdot|V(G_2)| , &  \text{if} \  \gamma(G_1)=2=\gamma(G_2)  
	\end{cases} 
	\\
	\\
	\zeta(G_1) \hspace{.55in} +\begin{cases} 
	0 & \text{if} \  \gamma(G_1)=1<\gamma(G_2) \\
	|V(G_1)|\cdot|V(G_2)| , &  \text{if} \  \gamma(G_1)=2<\gamma(G_2)  
	
\end{cases} 
\\
\\
|V(G_1)|\cdot|V(G_2)|,   \hspace{1.25in} \ \ \text{if} \  2<\gamma(G_1)\le \gamma(G_2).	
\end{cases}$
%$min\{\zeta(G)+\zeta(H)\}\le \zeta(G\vee H)\le \zeta(G)+\zeta(H)$
\end{theorem}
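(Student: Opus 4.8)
The plan is to first pin down $\gamma(G_1\vee G_2)$ and then, with that value in hand, enumerate the minimum dominating sets by classifying each according to how its vertices are distributed between $V(G_1)$ and $V(G_2)$. The single fact that drives everything is a structural observation about the join: since every vertex of $G_1$ is adjacent to every vertex of $G_2$, a nonempty $S\subseteq V(G_1)$ dominates $G_1\vee G_2$ if and only if $S$ dominates $G_1$ (all of $V(G_2)$ is covered for free by any one vertex of $S$), symmetrically for $S\subseteq V(G_2)$, and any $S$ meeting both $V(G_1)$ and $V(G_2)$ dominates $G_1\vee G_2$ automatically, because a vertex of $S\cap V(G_1)$ covers all of $V(G_2)$ while a vertex of $S\cap V(G_2)$ covers all of $V(G_1)$.

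For the domination number I would argue as follows. If $\gamma(G_1)=1$, a vertex dominating $G_1$ also covers all of $V(G_2)$ through the join edges, so $\gamma(G_1\vee G_2)=1$. If $\gamma(G_1)\ge 2$, then $\gamma(G_2)\ge\gamma(G_1)\ge 2$, so by the observation no single vertex can dominate the join (a vertex of $V(G_1)$ would have to dominate $G_1$, a vertex of $V(G_2)$ would have to dominate $G_2$); meanwhile Theorem \ref{T3} shows that any cross pair $\{u,v\}$ with $u\in V(G_1)$, $v\in V(G_2)$ is dominating, giving $\gamma(G_1\vee G_2)=2$.

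For the dominion I would then split into the five regimes appearing in the statement. When $\gamma(G_1\vee G_2)=1$ the minimum dominating sets are the singletons $\{x\}$ that dominate the join; by the observation these are exactly the $\gamma$-sets of $G_1$ (always present, contributing $\zeta(G_1)$) together with the $\gamma$-sets of $G_2$, the latter occurring only when $\gamma(G_2)=1$, which cleanly separates the rows $\gamma(G_1)=1=\gamma(G_2)$ and $\gamma(G_1)=1<\gamma(G_2)$. When $\gamma(G_1\vee G_2)=2$, every minimum dominating set is a pair, and the partition into both-in-$V(G_1)$, both-in-$V(G_2)$, and split-across yields three disjoint classes: the first consists of the $2$-element dominating sets of $G_1$, which coincide with the $\gamma$-sets of $G_1$ exactly when $\gamma(G_1)=2$ and is empty when $\gamma(G_1)>2$; the second behaves identically with $G_2$; and the split class always contributes precisely $|V(G_1)|\cdot|V(G_2)|$ pairs, since by the observation each such pair dominates. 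Matching these contributions against the hypotheses $\gamma(G_1)=2=\gamma(G_2)$, $\gamma(G_1)=2<\gamma(G_2)$, and $2<\gamma(G_1)\le\gamma(G_2)$ reproduces the three remaining rows.

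The work is less a single hard step than careful bookkeeping: I must verify that for $\gamma\le 2$ the location-based classes are exhaustive and pairwise disjoint, and that a size-$\gamma$ dominating set confined to one side is a minimum dominating set of that side exactly when that side's domination number equals $\gamma$ (otherwise the class is empty, accounting for the vanishing terms). The one point demanding a little care is the claim that a $2$-element subset of $V(G_i)$ dominating the join must be a genuine $\gamma$-set of $G_i$ rather than a mere dominating set; this holds because $\gamma(G_i)=2$ forces any size-$2$ dominating set of $G_i$ to be minimum, so the count is exactly $\zeta(G_i)$ with no over- or under-counting.
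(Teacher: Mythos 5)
Your proposal is correct and follows essentially the same route as the paper: determine $\gamma(G_1\vee G_2)$, then count the $\gamma$-sets by classifying them as lying entirely in $V(G_1)$, entirely in $V(G_2)$, or split across (the cross pairs of Theorem \ref{T3}). If anything, your version is tighter than the paper's, since your structural observation about domination in a join gives an explicit exhaustiveness argument (every $\gamma$-set of the join falls into exactly one of the three location classes, and a one-sided minimum set must be a $\gamma$-set of that side), a point the paper only gestures at by deferring to the proof of Theorem \ref{T3}.
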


%%%%%%%%%%%%%%%%%%%%%%%%%%%%%%%%%%%%%%%%%%%%%%%%%%%%%

\begin{proof}
The first part of the statement regarding the values of $\gamma(G_1\vee G_2)$ is trivial.

So, we proceed to establish the results in the second part of the statement. Throughout, we denote by $S_1$ and $S_2$ any $\gamma$-sets of $G_1$ and $G_2$, respectively. %Obviously, $S_1\cap S_2=\emptyset$.
\newline
(a)
\newline
Suppose $\gamma(G_1)=1=\gamma(G_2)$. Given the join operation, any vertex $v_1\in S_1$ also covers any other vertex $v_2\in V(G_2)$. Likewise, any vertex $v_2\in S_2$ also covers any other vertex $v_1\in V(G_1)$. Since $S_1$ and $S_2$ are any $\gamma$-sets of $G_1$ and $G_2$, respectively, the result follows.
\newline
(b)
\newline
Suppose $\gamma(G_1)=2=\gamma(G_2)$. 
It is clear that each $\gamma$-set $S_i$, $i=1,2,$ is also a $\gamma$- set of $G_1\vee G_2$, following from $(a)$. The result follows the argument presented in the proof of Theorem \ref{T3}.
%Because every vertex $v_i\in S_i$ covers every vertex $v_j\in V(G_j)$, $i\neq j$, it is clear that each $\gamma$ set, $S_k$ with $k=1,2,$ is also a $\gamma$-set of $G_1\vee G_2$. 
\newline
(c)
\newline
Suppose $\gamma(G_1)<\gamma(G_2)$ with $\gamma(G_1) \in \{1,2\}$. It follows that $\gamma(G_1\vee G_2)=\gamma(G_1)$. Further, if $\gamma(G_1)=1$, then $S_1$ is also a $\gamma$-set of $G_1\vee G_2$ and $S_2$ is not, since its size is greater than 1. Similarly, if $\gamma(G_1)=2$, $S_1$ is a $\gamma$-set of $G_1\vee G_2$ but not $S_2$. The result follows from the argument presented in the proof of Theorem \ref{T3}.
% n which case neither $S_1$ nor $S_2$ is a $\gamma$-set of $G_1\vee G_2$. It is clear that $\gamma(G_1\vee G_2)=\gamma(G_1)$ and therefore $S_2$, a $\gamma$-set of $G_2$ cannot be a $\gamma$-set of $G_1\vee G_2$. The result follows from a similar argument presented in (a) and (b). 
\newline
(d)
\newline
Finally, suppose $2<\gamma(G_1)\le \gamma(G_2)$. %$\gamma(G_1)>2$.
It must be that $\gamma(G_1\vee G_2)=2$. In which case, neither $S_1$ nor $S_2$ is a $\gamma$-set of $G_1\vee G_2$. The result follows from the argument presented in the proof of Theorem \ref{T3}.

\end{proof}
%%%%%%%%%%%%%%%%incorrect
\begin{corollary}
For any connected graph $G$ with $\gamma(G)=1$, $\zeta(\underbrace{G\vee \ldots \vee G}_r)=r\zeta(G)$, $r\ge 1$.
\end{corollary}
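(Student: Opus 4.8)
The plan is to proceed by induction on $r$, leaning on Theorem \ref{T4}, since the $r$-fold join can be built up one copy at a time. I would write $H_r=\underbrace{G\vee\cdots\vee G}_r$, so that $H_1=G$ and, by the associativity of the join (up to isomorphism), $H_r=H_{r-1}\vee G$. The base case $r=1$ reads $\zeta(G)=1\cdot\zeta(G)$ and is trivial.

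Before invoking Theorem \ref{T4} I would record the two structural facts that let it apply at every stage. First, any join of two nonempty graphs is connected, since each vertex of one part is adjacent to each vertex of the other; hence every $H_r$ is connected. Second, $\gamma(H_r)=1$ for all $r\ge 1$: a vertex $v$ that dominates a single copy of $G$ is joined by the added edges to every vertex of the remaining $r-1$ copies, so $v$ dominates all of $H_r$. Because $\gamma(G)=1$ guarantees such a dominating vertex exists inside each copy, we get $\gamma(H_r)=1$.

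The inductive step is then immediate. Since $H_{r-1}$ and $G$ are both connected and both satisfy $\gamma=1$, Theorem \ref{T4} case (a) gives $\zeta(H_{r-1}\vee G)=\zeta(H_{r-1})+\zeta(G)$. Applying the induction hypothesis $\zeta(H_{r-1})=(r-1)\zeta(G)$ yields $\zeta(H_r)=(r-1)\zeta(G)+\zeta(G)=r\zeta(G)$, completing the induction.

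I expect no genuine obstacle, as the content is essentially already packaged in Theorem \ref{T4}; the only point requiring care is confirming that we stay in case (a) at every step, that is, that $\gamma(H_{r-1})=1$ rather than $2$, which is exactly what the second paragraph secures. As an independent sanity check one can argue directly: since $\gamma(H_r)=1$, each $\gamma$-set is a singleton $\{v\}$, and such a $v$ dominates $H_r$ precisely when it dominates its own copy of $G$ (the other copies being covered automatically through the join edges). Thus the $\gamma$-sets of $H_r$ are in bijection with the pairs consisting of a copy index together with a dominating vertex of $G$, of which there are exactly $r\cdot\zeta(G)$.
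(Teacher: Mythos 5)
Your proof is correct and follows essentially the same route as the paper: induction on $r$, applying case (a) of Theorem \ref{T4} at each step. You simply make explicit the details the paper leaves implicit (connectivity of the iterated join and the fact that $\gamma$ stays equal to $1$), and your closing direct count of dominating singletons is a nice, equivalent alternative.
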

\begin{proof}
From Part (a) of Theorem \ref{T4} (when $\gamma(G_1)=1=\gamma(G_2)$), the result follows by induction on $r\ge 1$.
\end{proof}
We note that in the case when $G=K_1$, it is clear that $\underbrace{G\vee \ldots \vee G}_r:=K_r$ and $\zeta(K_r)=r=r\zeta(K_1)$. Here, we present a more general result for the special case when $G=\overline{K_{m}}$, the complement of a complete graph on $m\ge 1$ vertices. Note that $\overline{K_{m_1}} \vee \ldots \vee \overline{K_{m_k}}$ is a complete $k$-partite graph often written as $K(m_1,m_2,\ldots, m_k)$, with $m_i\ge 1$ and $k\ge 2$.

\begin{theorem}
Suppose  $G=K(m_1,m_2,\ldots, m_k)$ is a complete $k$-partite graph with $m_1\le m_2 \le \ldots \le m_k$. For some $r\ge 1$, we have

$\zeta(G)=\begin{cases} 
r &  \text{if} \  m_i=1,\  i=1, \ldots, r  \ and \  m_i>1, \ r<i\le k,\\
\ds \sum_{\substack {1\le i,j \le r \\ i\ne j}}m_im_j \ + &\begin{cases}
r & \text{if} \ m_i=2, \ i=1, \ldots, r  \ and \  m_i>r, \ r<i\le k, \\
0 & \text{if} \ m_i>2 \ i=1, \ldots, r=k.
\end{cases}
\end{cases}$

% $\zeta(G)=\begin{cases} 
% 		r &  \text{if} \  m_i=1,\  i=1, \ldots, r  \\
% 	\ds \prod_{i=1}^{k-r}m_i & \text{if} \ m_i=2, \ i=1, \ldots, r		\end{cases}$

\end{theorem}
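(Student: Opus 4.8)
The plan is to argue directly from the partition structure. Write $V_1,\dots,V_k$ for the colour classes of $G=K(m_1,\dots,m_k)$, so $|V_i|=m_i$ and $G=\overline{K_{m_1}}\vee\cdots\vee\overline{K_{m_k}}$. The single fact that drives the whole proof is that a vertex $v\in V_i$ is adjacent to every vertex outside $V_i$, so its closed neighbourhood is $N[v]=V\setminus(V_i\setminus\{v\})$. First I would determine $\gamma(G)$ from this: a lone vertex $v\in V_i$ dominates $G$ if and only if $V_i\setminus\{v\}=\emptyset$, i.e. $m_i=1$. Hence $\gamma(G)=1$ exactly when some class is a singleton, which by the ordering $m_1\le\cdots\le m_k$ means $m_1=1$; otherwise each single vertex misses a same-class vertex, so $\gamma(G)\ge 2$, and since (as shown below) any two vertices from distinct classes dominate, $\gamma(G)=2$. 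This is consistent with the values $\gamma(G_1\vee G_2)\in\{1,2\}$ recorded after Theorem \ref{T4}.

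In the first branch, where $m_1=\cdots=m_r=1<m_{r+1}$, the $\gamma$-sets are precisely the dominating singletons, and by the criterion above $\{v\}$ dominates iff $v$ lies in a singleton class. There are exactly $r$ such vertices, one per singleton class, giving $\zeta(G)=r$.

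When $\gamma(G)=2$ I would split the dominating pairs $\{u,w\}$ according to whether $u$ and $w$ share a class. If $u\in V_i$ and $w\in V_j$ with $i\ne j$, then $w$ covers $V_i$, $u$ covers $V_j$, and both cover every other class, so every cross-class pair dominates; these contribute $\sum_{1\le i<j\le k}m_im_j$ sets. If instead $u,w$ both lie in $V_i$, the only vertices of $V_i$ they cover are $u,w$ themselves, so the pair dominates iff $V_i=\{u,w\}$, i.e. $m_i=2$, and then $V_i$ contributes its unique internal pair and nothing else. Summing, $\zeta(G)=\sum_{1\le i<j\le k}m_im_j+\#\{i:m_i=2\}$. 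This matches the two remaining branches of the statement: the additive term equals $r$ when the $r$ smallest classes have size $2$ and the larger ones exceed it, and it is $0$ in the branch $r=k$ with all classes of size exceeding $2$.

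The two covering verifications are routine; the step that needs genuine care is the within-class analysis, because it is the sole mechanism by which a pair drawn from a single class can dominate, and it is exactly what produces the $+r$ (respectively $+0$) correction to the cross-class count. The remaining effort is organisational: checking that the three branches---$m_1=1$, $m_1=2$, and $m_1>2$---exhaust all sorted tuples $(m_1,\dots,m_k)$, and reconciling the cross-class total $\sum_{1\le i<j\le k}m_im_j$ with the index ranges in which it is displayed. I expect this last reconciliation of the closed form, rather than any graph-theoretic content, to be the main place where the bookkeeping must be pinned down precisely.
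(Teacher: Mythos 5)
Your proposal is correct and takes essentially the same route as the paper's own proof: a case split on whether some class is a singleton (giving $\gamma(G)=1$ and $\zeta(G)=r$), and otherwise, with $\gamma(G)=2$, counting all cross-class pairs as $\gamma$-sets and adding one extra $\gamma$-set for each class of size exactly $2$. If anything, your write-up is slightly more careful than the paper's, since you count unordered pairs $\sum_{1\le i<j\le k}m_im_j$ over all $k$ classes and justify explicitly that a within-class pair dominates only when it exhausts its class, whereas the paper's displayed sum leaves the ordered-versus-unordered convention and the index range ($r$ versus $k$) ambiguous.
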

\begin{proof}
Let $V_i$ denote each disjoint partite (vertex) set of $G$, with $|V_i|=m_i$, $i=1,\ldots, k$. If, for some $j=1,\ldots, r$, $m_j=1$ and $m_i>1$ for $i>r$, then it is clear that $\gamma(G)=1$ and the $\gamma$-sets in $G$ are the singleton subsets of $V_1\cup\ldots \cup V_r$. It follows that $\zeta(G)=r$. %since no vertex $v\in V_i$ covers any other vertex in $V_i$, $\zeta(G)=r$ 

%If $m_i>1$, for $i=1,\ldots, r$ then $m_i>1$, for $i=1,\ldots, k$.
Assume $m_i>1$, for $i=1,\ldots, k$. It is clear that $\gamma(G)=2$. Any vertex $v\in V_i$ covers all vertices $v'\in V(G)\setminus V_i$ except those in $V_i$. Thus, for each $v\in V_i$ and $v'\in V_j$, $i\ne j$, $\{v,v'\}$ is a $\gamma$-set of $G$. It follows that there are $\ds \sum_{i\ne j}^{{k}}m_im_j$ total such sets. Further, if  $|V_i|=2$ for $i=1,\ldots, r$, then each such sets, $V_i$, is an additional $\gamma$-set, giving the result.
\end{proof}

%\newpage
 {\bf Acknowledgment.}
The authors are indebted to the reviewer, Pete Johnson Jr., for his careful reading of the article and for his many corrections and suggestions; they have greatly enhanced the quality of this research.
%by a research grant from ...


\begin{thebibliography}{99}
%PAPER REFERENCE FORMAT:

\bibitem{FB} Fairouz Beggas, Decomposition and Domination of Some Graphs, Data Structures and Algorithms, Université Claude Bernard Lyon 1, 2017.

\bibitem{BR} Razika Boutrig  and Mustapha Chellali, A Note on a Relation Between the Weak and Strong Domination Numbers of a Graph, Opuscula Mathematica, 32:235-238, 2012.

\bibitem{EC} Ernest J Cockayne, RM Dawes, and Stephen T Hedetniemi, Total domination in graphs, Networks, 10(3):211-219, 1980.

\bibitem{GD} Gayla S Domke, Johannes H Hattingh, Michael A Henning, and
Lisa R Markus. Restrained domination in trees, Discrete Mathematics,
211(1):1-9, 2000.

\bibitem{MH}Michael A Henning, A survey of selected recent results on total domination in graphs, Discrete Mathematics, 309(1):32-63, 2009. 

\bibitem{FH} Frank Harary and Teresa W Haynes, Double domination in graphs, Ars Combinatoria, 55:201-214, 2000. 

\bibitem{Hay1} Teresa W Haynes, Stephen Hedetniemi, and Peter Slater, Domination in graphs: advanced topics, 1997. 
\bibitem{Hay2} Teresa W Haynes, Stephen Hedetniemi, and Peter Slater, Fundamentals of domination in graphs, CRC Press, 1998. 
\bibitem{ML} Marilynn Livingston and Quentin F. Stout, Perfect dominating sets, University of Michigan, Computer Science and Engineering Division. Department of Electrical Engineering and Computer Science, 1990.

 \end{thebibliography}
\end{document}